\definecolor{dblue}{rgb}{0,0,.6}
\numberwithin{equation}{section}
\newtheorem{theorem}{Theorem}[section]
\theoremstyle{plain}
\newtheorem{lemma}[theorem]{Lemma}
\newtheorem{remark}[theorem]{Remark}
\newcommand{\A}{\mathbb A}
\newcommand{\CP}{\mathbb P}
\newcommand{\im}{\operatorname{im}}
\newcommand{\Sym}{\operatorname{Sym}}
\newcommand{\id}{\operatorname{id}}
\newcommand{\Spec}{\operatorname{Spec}}
\newcommand{\dashedlongrightarrow}{\xymatrix@1@=15pt{\ar@{-->}[r]&}}
\renewcommand{\longrightarrow}{\xymatrix@1@=15pt{\ar[r]&}}
\renewcommand{\mapsto}{\xymatrix@1@=15pt{\ar@{|->}[r]&}}
\renewcommand{\twoheadrightarrow}{\xymatrix@1@=15pt{\ar@{->>}[r]&}}
\newcommand{\hooklongrightarrow}{\xymatrix@1@=15pt{\ar@{^(->}[r]&}}
\newcommand{\congpf}{\xymatrix@1@=15pt{\ar[r]^-\sim&}}
\renewcommand{\cong}{\simeq}
\begin{document}    
\title[Geometric retract rationality of norm varieties]{Geometric retract rationality of norm varieties} 

\author{Stefan Schreieder} 
\address{Institute of Algebraic Geometry, Leibniz University Hannover, Welfengarten 1, 30167 Hannover, Germany.}
\email{schreieder@math.uni-hannover.de}

\date{June 9, 2023} 
\subjclass[2010]{primary 14E08, 14M20, 14M22; secondary 19D45} 
%



\begin{abstract}
Let $k$ be a field of characteristic zero and let  $\alpha=(a_1,\dots ,a_n)$ be any ordered sequence of invertible elements $a_i \in k^\ast$. 
We show that for any prime $\ell$,  the associated norm variety $X_{\alpha,\ell}$   is geometrically retract rational. 
This generalizes a recent result in \cite{BHS}, where geometric $\A^1$-connectedness (an a priori weaker notion) had been proven. 
\end{abstract}
 
\maketitle 

\section{Introduction}
 
Let $\ell$ be a prime and let $k$ be field of characteristic zero.
Let $\alpha:=(a_1,\dots ,a_n)$ be an ordered sequence of elements $a_1,\dots ,a_n\in k^\ast$.
Associated to this there is a standard norm variety $X_{\alpha,\ell}$ (see \cite{rost02,SJ}) which played a crucial role in Voevodsky's proof of the Bloch--Kato conjecture \cite{Voe}.

 Asok proved that $X_{\alpha,\ell}$  is geometrically rationally connected, see \cite[Proposition 2.6 and Remark 2.9]{asok}.
In \cite[Theorem 1]{BHS}, the authors improve Asok's result by showing that  $X_{\alpha,\ell} $ is geometrically $\A^1$-connected.
Equivalently, $X_{\alpha,\ell} $  becomes universally $R$-trivial over the algebraic closure $\bar k$, which means that after any field extension $L/\bar k$, any two $L$-rational points of $X_{\alpha,\ell}\times_kL$ can be joined by a chain of rational curves.
(The result of Asok handled the case where $L$ is algebraically closed.)

Recall that a variety $X$ is retract rational if the identity on $X$ factors as rational map through some projective space.
Any retract rational variety is unirational, while the converse is not true in general, see e.g.\ \cite{AM}.
Moreover, 
Asok and Morel showed that retract rational varieties are $\A^1$-connected (at least in characteristic zero), see \cite[Theorem 2.3.6]{asok-morel}.
The converse to this is an open question, see \cite[Remark 2.3.11]{asok-morel}.
It is thus natural to ask whether standard norm varieties are not only geometrically $\A^1$-connected but in fact geometrically retract rational.
This question goes back to Asok \cite[Remark 2.9]{asok} and  Balwe--Hogadi--Sawant \cite[Introduction]{BHS}. 
In this short note we answer this question positively. 

\begin{theorem} \label{thm:main}
Let $k$ be a field of characteristic zero and let $\alpha:=(a_1,\dots ,a_n)$ be an ordered sequence  of elements  $a_1,\dots ,a_n\in k^\ast$ with $n\geq 2$.
Then for any prime $\ell$, the standard norm variety $X_{\alpha,\ell}$ is geometrically retract rational.
\end{theorem}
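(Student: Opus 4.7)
The plan is to upgrade the universal $R$-triviality of $X := X_{\alpha,\ell}\times_k\bar k$ established in \cite{BHS} to the stronger property of retract rationality. Since the conclusion is geometric, one works throughout over $\bar k$.

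First I would make the output of \cite{BHS} explicit at the generic point. By \cite{asok}, $X$ has a $\bar k$-rational point $x_0$; applying universal $R$-triviality over the function field $L = \bar k(X)$, the generic point $\eta_X \in X(L)$ and the constant point $x_0 \in X(\bar k)\subset X(L)$ are $R$-equivalent. Unwinding the definition, this yields a rational map $H \colon Y\times X \dashrightarrow X$ over $\bar k$, where $Y$ is a chain of copies of $\mathbb{P}^1$ (one per link in the $R$-equivalence chain), such that $H$ restricts to $\id_X$ at some $\bar k$-point $y_1 \in Y$ and to the constant map with value $x_0$ at another $\bar k$-point $y_0 \in Y$.

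The main step is then to convert this $\A^1$-homotopy $H$ into an honest factorization by rational maps $X \dashrightarrow \mathbb{P}^N \dashrightarrow X$ whose composition is $\id_X$. This step cannot be purely formal, since the implication \emph{$\A^1$-connected $\Rightarrow$ retract rational} is an open problem in general (see \cite[Remark 2.3.11]{asok-morel}); it must use features specific to norm varieties. My plan would exploit the explicit construction of $X_{\alpha,\ell}$ in terms of the norm form of a degree-$\ell$ cyclic algebra (respectively a Pfister form when $\ell=2$), proceeding by induction on the symbol length $n\geq 2$: the base case $n=2$ yields a Severi--Brauer-type variety, which is geometrically rational and hence trivially geometrically retract rational. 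For the inductive step, one uses a natural fibration of $X_{\alpha,\ell}$ over a norm variety of a shorter symbol, combined with the $\A^1$-homotopy data encoded in $H$, to splice fibrewise retractions into a global one.

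The main obstacle is precisely this splicing: even given retract rationality of the base and of the general fibre, a fibration need not be retract rational. Here the $\A^1$-homotopy $H$ should supply the missing input, providing the data needed to promote fibrewise retractions into a coherent global one without losing the uniformity required to land in a single projective space. I expect this gluing to be the technical heart of the argument, and to be where the norm-variety structure enters in an essential way beyond what \cite{BHS} already provides.
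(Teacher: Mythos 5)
Your proposal has the right outer shell (work over $\bar k$, induct on $n$, base case $n=2$ via Severi--Brauer varieties, use a fibration over a norm variety of a shorter symbol), but the inductive step is left unspecified at exactly the point where the real content lies, and the ingredient you propose to fill it with --- the $\A^1$-homotopy $H$ from \cite{BHS} --- is not the right tool and plays no role in the actual argument. Two concrete gaps. First, the fibration in the standard construction is not over $X_{\alpha',\ell}$ itself but over $\Sym^\ell Y$ for $Y$ birational to $X_{\alpha',\ell}$, so the induction does not close unless you also prove that symmetric powers of retract rational varieties are retract rational. This is a genuine lemma (Lemma \ref{lem:1} of the paper): it uses Mattuck's birational identification $\Sym^m\CP^N\cong\CP^{mN}$, a general linear translation to move a general point of $\Sym^m f(\Sym^m Y)$ into the locus where that identification is defined, and the lifting property of retract rational varieties over local rings (Saltman's criterion, packaged as Lemma \ref{lem:0}) to get a section of a generically injective map rather than merely some retraction. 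None of this appears in your sketch.

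Second, the ``main obstacle'' you identify --- splicing fibrewise retractions into a global one --- is in fact not an obstacle at all once the hypotheses are stated correctly, and it is not resolved by $\A^1$-homotopy data. If the \emph{generic} fibre is retract rational over the function field $k(\Sym^\ell Y)$ (not merely geometrically) and the base is retract rational over $k$, then the total space is retract rational by a short spreading-out argument (Lemma \ref{lem:2}): spread the fibrewise retraction $X_\eta\dashrightarrow\CP^r_{k(Y)}\dashrightarrow X_\eta$ out to rational maps over $Y$ and compose with a retraction of the base. The substantive input is therefore retract rationality of the generic fibre over $k(\Sym^\ell Y)$ itself: the fibre is the norm hypersurface $\{N-a_n=0\}$ for the degree-$\ell$ extension $L/K$ with $L=k(Y\times\Sym^{\ell-1}Y)$, and since $k=\bar k$ contains an $\ell$-th root $\xi$ of $a_n$, multiplication by $\xi$ identifies this hypersurface with the norm one torus $\{N-1=0\}$, which is retract rational by Endo's theorem \cite[Theorem 4.1]{endo} because $\ell$ is prime. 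Your proposal never invokes this, and without it the induction has no way to start on the fibre direction; meanwhile the $R$-triviality chain $H$ from \cite{BHS}, which you place at the heart of the argument, gives no mechanism for producing the required factorization through a projective space --- as you yourself note, that implication is open in general, and the paper's proof is entirely independent of \cite{BHS}.
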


The above theorem is formulated for fields of characteristic zero, because the standard norm varieties under consideration are mostly studied under this assumption, see e.g.\ \cite{SJ}.
However,  our proof does not make use of resolution of singularities and works in arbitrary characteristic. 
In particular, standard norm varieties in positive characteristic, defined analogously to the case of characteristic zero,  are geometrically retract rational, despite the fact that it is not clear that these varieties have a smooth projective model.

We note that  $X_{\alpha,\ell}$ is in general not retract rational over the ground field $k$.
In fact, whenever the symbol associated to $\alpha$ is nonzero in $K^M_n(k)/\ell$, i.e.\ in  Milnor K-theory modulo $\ell$,  then the norm variety $X_{\alpha,\ell}$ is never retract rational (it is not even unirational, because it does not contain a rational point).

\subsection{Notation} \label{subsec:notation}
An algebraic scheme is a separated scheme of finite type over a field.
A $k$-variety is an integral algebraic scheme over a field $k$.
For a $k$-variety $X$ and a positive integer $m$, we denote by $\Sym ^mX=X^m/S_m$ the $m$-th symmetric power of $X$.
In general, $\Sym^mX$ is an algebraic scheme over $k$; it is integral, hence a variety in the above sense, if $X$ is geometrically integral.
We will use that $\Sym^m$ is a covariant functor on the category of geometrically integral $k$-varieties.
 
A $k$-variety $X$ is retract rational if there is a rational map $f:X\dashrightarrow \CP^N$ for some $N\geq 1$ and a rational map $g:\CP^N\dashrightarrow X$ such that $g\circ f$ is defined and agrees with the identity as a rational map.
In other words, there is a rational map $f:X\dashrightarrow \CP^N$ which admits a rational section.
Clearly, if $X$ is retract rational, then so is any birational model (e.g.\ any non-empty open subset).
Moreover, $X$ is retract rational if and only if $X\times \CP^n$ is retract rational for some $n\geq 0$.
In particular, any stably rational variety is retract rational. 

By definition,  $X$ is retract rational if and only if  there are open dense subsets $U\subset X$ and $V\subset \CP^N$ for some $N$ such that the identity on $U$ factors through $V$: $U\to V\to U$. 
It  follows from this description that  retract rational varieties are geometrically integral (it is in the above notation enough to check that $U$ is geometrically integral, which follows from the fact that $V$ is geometrically integral).
In particular, if $X$ is a retract rational $k$-variety then any symmetric power $\Sym ^m X$ is a $k$-variety in the above sense.

\section{Three lemmas} 

We say that a rational map $f:X\dashrightarrow Y$ between two $k$-varieties is generically injective if there are dense open subsets $U\subset X$ and $V\subset Y$ such that $f$ restricts to a closed embedding $U\hookrightarrow V$.
Equivalently, if $\eta_X\in X$ denotes the generic point, then $f$ induces an isomorphism between residue fields $\kappa(f(\eta_X))\stackrel{\cong}\to \kappa(\eta_X)$. 

\begin{lemma} \label{lem:0}
Let $k$ be a field and let $Y$ be a retract rational $k$-variety.
Then for any rational map $f:Y\dashrightarrow \CP^N$ which is generically injective, there is a rational section $g:\CP^N\dashrightarrow Y$, i.e.\ a rational map $g$ such that $g\circ f$ is defined and coincides with the identity as a rational map. 
\end{lemma}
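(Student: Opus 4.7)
The plan is to leverage the retract rationality of $Y$ together with the observation that, being generically injective, $f$ realises $Y$ birationally as a closed subvariety of $\CP^N$. Unpacking the hypotheses: retract rationality yields dense opens $U_0\subset Y$ and $V_0\subset \CP^M$ (for some $M$) and morphisms $h_0: U_0\to V_0$ and $s_0: V_0\to U_0$ with $s_0\circ h_0=\id_{U_0}$, while generic injectivity of $f$ yields dense opens $U_1\subset Y$ and $V_1\subset \CP^N$ such that $f|_{U_1}: U_1\hookrightarrow V_1$ is a closed embedding. After shrinking I may assume $U_1\subset U_0$. Writing $Z:=f(U_1)\subset V_1$ and $\bar Z\subset \CP^N$ for its closure, the inverse of the isomorphism $f|_{U_1}: U_1\xrightarrow{\sim}Z$ composed with $h_0$ produces a morphism $\psi: Z\to V_0\subset \CP^M$.

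The key step will be to extend $\psi$ to a rational map $\tilde\psi: \CP^N\dashrightarrow \CP^M$. I would do this by expressing $\psi=[p_0:\cdots:p_M]$ with $p_i\in k(Z)=k(\bar Z)$ (not all zero) and using the surjection $\OO_{\CP^N,\eta_{\bar Z}}\to k(\bar Z)$ of the local ring of $\CP^N$ at the generic point of $\bar Z$ onto its residue field to choose lifts $P_i\in k(\CP^N)$ of the $p_i$. The tuple $\tilde\psi:=[P_0:\cdots:P_M]$ then defines a rational map $\CP^N\dashrightarrow \CP^M$ whose restriction to $\bar Z$ coincides with $\psi$ as a rational map, since evaluating $[P_0:\cdots:P_M]$ at the generic point of $\bar Z$ returns $[p_0:\cdots:p_M]$.

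Finally, setting $g:=s_0\circ \tilde\psi: \CP^N\dashrightarrow Y$ finishes the proof: on $U_1$ one computes $g\circ f = s_0\circ \tilde\psi\circ f = s_0\circ \psi\circ f = s_0\circ h_0 = \id_{U_1}$, so $g\circ f$ is defined and coincides with the identity as a rational map. The only nontrivial point is the extension of $\psi$; however, this is routine, relying only on the description of rational maps to projective space via tuples of rational functions, together with the standard fact that rational functions on closed subvarieties of $\CP^N$ lift to rational functions on $\CP^N$.
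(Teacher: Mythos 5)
Your proof is correct and follows essentially the same route as the paper: both arguments produce $g$ by lifting the retraction data of $Y$ along the surjection from the local ring $\OO_{\CP^N,f(\eta_Y)}$ of $\CP^N$ at the generic point of $f(Y)$ onto its residue field $k(Y)$. The paper packages this as the easy direction of Saltman's criterion (surjectivity of $V(A)\to V(\kappa)$ for local $k$-algebras $A$), while you carry out the same lifting explicitly on the projective coordinates of $h_0\circ (f|_{U_1})^{-1}$; the one point you leave implicit --- that $s_0\circ\tilde\psi$ is defined, i.e.\ that $\tilde\psi$ sends the generic point of $\CP^N$ into $V_0$ --- holds because that point generizes $\tilde\psi(\eta_{\bar Z})=h_0(\eta_Y)\in V_0$ and $V_0$ is open, which is precisely the paper's ``because $A$ is local'' step.
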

\begin{proof}
By Saltman's criterion \cite[Theorem 3.4]{saltman} (see also \cite[Proposition 1.2]{CTS}), there is a dense open subset $V\subset Y$ such that for any local $k$-algebra $A$ with residue field $\kappa$, the map $V(A)\to V(\kappa)$ is surjective.
(In fact, this is the easy direction of Saltman's criterion: choose $V$ such that there is a dense open subset $U\subset \A^N$ such that the identity on $V$ factors via $V\to U\to V$.
Any map $\epsilon:\Spec \kappa\to \A^N$ lifts to a map $\epsilon_A:\Spec A\to \A^N$.
If $\im(\epsilon)\subset U$, then $\im (\epsilon_A)\subset U$, because $A$ is local.
Hence,   $U(A)\to U(\kappa)$ is surjective.
By the factorization $V\to U\to V$ of $\id_V$, $V(A)\to V(\kappa)$ is surjective as well.)

We apply this to the local ring $A:=\mathcal O_{\CP^N,f(\eta_Y)}$ of $\CP^N$ at the image of the generic point of $Y$.
Since $f$ is generically injective, the residue field of $A$ is isomorphic to $k(Y)$.
It follows that the ``diagonal'' point $\delta_Y\in Y_{k(Y)}$, corresponding to the canonical map $\Spec k(Y)\to Y$,  lifts to an $A$-valued point of $Y$.
In other words, there is a morphism $\Spec A\to Y$ which restricts to the inclusion of the generic point $\eta_Y$ on the closed point of $\Spec A$.
Since $A=\mathcal O_{\CP^N,f(\eta_Y)}$, we conclude that there is a rational map $g:\CP^N\dashrightarrow Y$ which is defined at $f(\eta_Y)$ and such that $g\circ f$ coincides as rational map with the identity on $Y$.
This concludes the proof of the lemma.
\end{proof}

\begin{remark} 
Since any variety is birational to a hypersurface, it follows from Lemma \ref{lem:0} that any retract rational k-variety $Y$ admits a rational map $Y\dashrightarrow \CP^{\dim Y+1}$ with a rational section.
\end{remark}

\begin{remark} 
 The argument in Lemma \ref{lem:0}
 works verbatim if we replace $\CP^N$ by any $k$-variety $Z$. 
Hence, if $Y$ is retract rational, then any generically injective map $f:Y\dashrightarrow Z$ to a $k$-variety $Z$ admits a rational section.
 (Thanks to Remy van Dobben de Bruyn for pointing this out.)
In fact, this a priori stronger statement is   a formal consequence of Lemma \ref{lem:0} because any $k$-variety $Z$ admits a generically injective rational map $\iota:Z\dashrightarrow \CP^N$ and so we may apply Lemma \ref{lem:0} to $\iota\circ f$ to produce a rational section of $\iota\circ f$ which restricts to a rational section of $f$, as we want.
 \end{remark}

\begin{lemma} \label{lem:1}
Let $Y$ be a retract rational $k$-variety over an infinite field $k$.
Then $\Sym^mY$ is retract rational over $k$ for any $m\geq 1$.
\end{lemma}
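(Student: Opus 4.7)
The plan is to use the covariant functoriality of $\Sym^m$ on geometrically integral $k$-varieties to transport the retract structure from $Y$ up to $\Sym^m Y$, reducing the statement to the rationality of $\Sym^m \CP^N$. By the factorisation description of retract rationality recalled in Section~\ref{subsec:notation}, choose dense open subsets $U \subseteq Y$ and $V \subseteq \CP^N$ together with morphisms $U \to V \to U$ whose composition is $\id_U$. Both $U$ and $V$ are geometrically integral (the former because retract rational varieties are, as noted in Section~\ref{subsec:notation}), so applying the functor $\Sym^m$ yields morphisms $\Sym^m U \to \Sym^m V \to \Sym^m U$ whose composition is $\id_{\Sym^m U}$. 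Since $\Sym^m U \subseteq \Sym^m Y$ and $\Sym^m V \subseteq \Sym^m \CP^N$ are dense opens, it then suffices to prove that $\Sym^m \CP^N$ is rational.

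The main step, and the principal obstacle, is therefore the rationality of $\Sym^m \CP^N$. I would prove it by a classical Lagrange-interpolation argument on the affine chart. Project
\[
\Sym^m \A^N \to \Sym^m \A^1 \cong \A^m
\]
onto the unordered tuple of first coordinates, encoded by the elementary symmetric polynomials $e_1, \ldots, e_m$ in $x_{1,1}, \ldots, x_{m,1}$. Over the dense open in $\A^m$ where the polynomial $P(t) := t^m - e_1 t^{m-1} + \cdots + (-1)^m e_m$ has distinct roots, the fibre is an affine space of dimension $m(N-1)$ parametrising the Lagrange interpolants $P_j(t) \in k[t]$ for $j = 2, \ldots, N$ (each of degree $<m$), which recover the remaining coordinates via $x_{i,j} = P_j(x_{i,1})$. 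In function-field terms, the coefficients of these $P_j$ together with $e_1, \ldots, e_m$ generate a subfield $K \subseteq k(x_{i,j})^{S_m}$; since $k(x_{i,j}) = K(x_{1,1}, \ldots, x_{m,1})$ with the $x_{i,1}$ being roots of $P(t) \in K[t]$, a brief degree check gives $[k(x_{i,j}):K] \leq m! = [k(x_{i,j}) : k(x_{i,j})^{S_m}]$, forcing $K = k(x_{i,j})^{S_m}$ and hence pure transcendence of the invariant field.

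Combining the two ingredients, $\Sym^m Y$ is birational to $\Sym^m U$, which retracts off the dense open $\Sym^m V \subseteq \Sym^m \CP^N$; chaining with any birational equivalence $\Sym^m \CP^N \dashrightarrow \CP^{mN}$ in both directions yields the desired factorisation of the identity on $\Sym^m U$ through projective space, establishing retract rationality of $\Sym^m Y$. Apart from the rationality input for $\Sym^m \CP^N$, everything is formal manipulation of the definitions via functoriality of $\Sym^m$.
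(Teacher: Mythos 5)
Your reduction to the rationality of $\Sym^m\CP^N$ has a genuine gap at the chaining step. From $U\to V\to U$ you correctly get $\Sym^mU\to \Sym^mV\to\Sym^mU$ composing to the identity, and $\Sym^m\CP^N$ is indeed rational (your Lagrange-interpolation argument is a correct reproof of \cite{mattuck}, which the paper simply cites). But ``chaining with any birational equivalence $\varphi:\Sym^m\CP^N\dashrightarrow\CP^{mN}$ in both directions'' is not a formal manipulation: $\varphi$ is an isomorphism only on some dense open $W\subset\Sym^m\CP^N$, while the image of $\Sym^mU$ in $\Sym^mV$ is a subvariety of dimension $m\dim Y$, typically much smaller than $mN$, whose generic point may lie entirely outside $W$ (a dense open need not contain the generic point of a given proper subvariety). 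In that case $\varphi$ composed with $\Sym^mU\to\Sym^mV$ is undefined, or fails to be inverted by $\varphi^{-1}$ at the relevant point, and no factorisation of $\id_{\Sym^mU}$ through $\CP^{mN}$ results. Your own construction makes the failure concrete: your birational map is an isomorphism only where the first coordinates $x_{1,1},\dots,x_{m,1}$ are distinct, and if $f(Y)$ lies in a hyperplane $\{x_1=c\}$ then every point of the image of $\Sym^m U$ violates this. A telltale sign is that your argument never uses the hypothesis that $k$ is infinite, which the lemma requires.

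This composition problem is exactly what the paper's proof is built to solve, and is where all its extra work goes: it first arranges (via a projective model, Lemma \ref{lem:0}, and a Veronese re-embedding) that $f:Y\hookrightarrow\CP^N$ is a closed embedding with $\dim\langle f(Y)\rangle\geq m$, and then composes with $\Sym^mh$ for a linear automorphism $h\in\PGL_{N+1}(k)$ chosen so that a general point of $\Sym^mY$ --- hence the generic point --- is carried into the open set where Mattuck's map is an isomorphism. The existence of such an $h$ uses that $k$-points are Zariski dense (here $k$ infinite enters) and that $m$ general points of $f(Y)$ are linearly independent. To repair your proof you would either need to import this general-position argument, or prove and invoke a separate non-formal lemma of the shape ``if $\id_X$ factors rationally through a retract rational variety $Z$, then $X$ is retract rational'' (e.g.\ via Saltman's local-ring lifting criterion, in the spirit of Lemma \ref{lem:0}); as written, that transitivity is asserted rather than proved.
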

\begin{proof} 
The statement is trivial if $\dim Y=0$, because retract rational varieties are geometrically integral and so $Y=\Spec k$ in this case.
We may therefore assume that $Y$ has positive dimension.
Up to replacing $Y$ by a projective model, we may assume that there is a closed embedding $f:Y\hookrightarrow  \CP^{N}$ for some $N$. 
Since $\dim Y\geq 1$,  composing the embedding $Y\hookrightarrow \CP^N$ with some Veronese embedding of $\CP^N$, we can ensure that
\begin{align}\label{eq:lem:1}
\dim   \left\langle f(Y)\right\rangle \geq m,
\end{align}
where $\left\langle f(Y)\right\rangle \subset \CP^N$ denotes the smallest linear subspace that contains $f(Y)$.

Since $Y$ is retract rational,  Lemma \ref{lem:0} shows that $f$ admits a rational section, i.e.\ a rational map $g:\CP^N\dashrightarrow Y$ such that $g\circ f$ is defined and coincides with the identity (as a rational map). 
Taking the symmetric product, we get maps
$$
\Sym^m f:\Sym^m Y\longrightarrow \Sym^m\CP^N\ \ \text{and}\ \ \Sym^mg:\Sym^m \CP^N\dashrightarrow \Sym^m Y
$$
such that $\Sym^m g\circ \Sym^m f$ is defined and coincides with the identity.

By \cite{mattuck}, there is a birational map
$$
\varphi: \Sym^m\CP^N\stackrel{\cong}\dashrightarrow \CP^{m N} .
$$
Hence there are open dense subsets $U\subset \Sym^m\CP^N$ and $V\subset \CP^{mN}$ such that $\varphi$ induces an isomorphism $U\cong V$.

From now on we use that $k$ is infinite, which implies that the $k$-rational points of $U$ are Zariski dense.
We may therefore pick a general $k$-rational point $\xi\in U$ and note that $\xi$ corresponds to a set $\{x_1,\dots ,x_m\}$ of general $k$-rational points $x_i\in \CP^N$.
By (\ref{eq:lem:1}), $\dim \left\langle f(Y)\right\rangle \geq m$ and so $N\geq m$ (because $\left\langle f(Y)\right\rangle $ is a subspace of $\CP^N$).
It follows that the general points $x_1,\dots ,x_m$ of $\CP^N$ are linearly independent in $\CP^N$.
Similarly, a general $k$-rational point $\zeta\in \Sym^mY$ corresponds to a set $\{y_1,\dots ,y_m\}$ of general $k$-rational points $y_i\in Y$.
(Here we use that the set of $k$-rational points of $Y$ is Zariski dense because $k$ is infinite and $Y$ is retract rational, hence unirational over $k$.)
Since $\zeta$ is general, $f$ is defined at each of the points $y_1,\dots ,y_m$ and since $\dim \left\langle f(Y)\right\rangle \geq m$, the points $f(y_1),\dots ,f(y_m)$ of $\CP^N$ are linearly independent.
It follows that there is a linear automorphism $h:\CP^N\stackrel{\cong}\to \CP^N$ defined over $k$ such that $h(f(y_i))=x_i$ for all $i$.
The induced automorphism
$$
\Sym^m h:\Sym^m\CP^N\longrightarrow \Sym^m\CP^N
$$
satisfies
$$
\Sym^m h(\Sym^m f(\zeta))=\xi\in U.
$$
It follows that the composition
$$
\Sym^m Y\stackrel{\Sym^m f}\dashrightarrow \Sym^m\CP^N \stackrel{\Sym^m h} \longrightarrow \Sym^m\CP^N \stackrel{\varphi}\dashrightarrow \CP^{m N}
$$
is defined and admits a rational section, given by
$$
\CP^{m N}\stackrel{\varphi^{-1}}\dashrightarrow \Sym^m\CP^N  \stackrel{\Sym^m h^{-1}} \longrightarrow \Sym^m\CP^N\stackrel{\Sym^m g}\dashrightarrow \Sym^m Y .
$$
(Here we use that $\Sym^m g$ is defined at $\Sym^m f(\zeta)$, because $\zeta\in \Sym^mY$ was chosen to be general and $g\circ f$ is defined as a rational map by assumption.)
This concludes the proof of the lemma.
\end{proof}

\begin{lemma} \label{lem:2}
Let $h:X\to Y$ be a dominant morphism of $k$-varieties whose generic fibre $X_\eta$ is irreducible.
If $X_\eta$ is retract rational over $k(Y)$ and $Y$ is retract rational over $k$, then $X$ is retract rational over $k$.
\end{lemma}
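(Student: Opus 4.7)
The plan is to combine the two retract structures into a single retract structure on $X$ factoring through the rational variety $\CP^N\times\CP^M$. Fix rational maps $\phi:Y\dashrightarrow\CP^M$ and $\psi:\CP^M\dashrightarrow Y$ witnessing the retract rationality of $Y$, so $\psi\circ\phi=\id_Y$. The retract rationality of the generic fibre $X_\eta$ over $k(Y)$ spreads out in a standard way (since $\CP^N$ is of finite presentation over $k$): after shrinking $Y$ to a dense open $V$, we obtain an integer $N$ and rational maps $F:X\dashrightarrow\CP^N\times Y$ and $G:\CP^N\times Y\dashrightarrow X$, defined on $h^{-1}(V)$ and $\CP^N\times V$ respectively, both over $Y$ (so $\pr_Y\circ F=h$ and $h\circ G=\pr_Y$ as rational maps), and satisfying $G\circ F=\id_X$.

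With these in hand, I would glue via the product and set
$$
\tilde F:=(\id_{\CP^N}\times\phi)\circ F:X\dashrightarrow\CP^N\times\CP^M,\qquad \tilde G:=G\circ(\id_{\CP^N}\times\psi):\CP^N\times\CP^M\dashrightarrow X.
$$
Once these are shown to be well-defined rational maps, the composition computes formally as
$$
\tilde G\circ\tilde F=G\circ(\id_{\CP^N}\times(\psi\circ\phi))\circ F=G\circ F=\id_X.
$$

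The remaining work is to check that $\tilde F$ and $\tilde G$ make sense as compositions of rational maps. For $\tilde F$: because $F$ is over $Y$ and $h$ is dominant, $F$ sends a dense open of $X$ into $\CP^N\times U_\phi$, where $U_\phi\subset Y$ is the (dense open) domain of $\phi$; this is precisely the locus where $\id_{\CP^N}\times\phi$ is defined. For $\tilde G$: the identity $\psi\circ\phi=\id_Y$ forces $\psi$ to be dominant, hence $\id_{\CP^N}\times\psi$ is dominant to $\CP^N\times Y$, and therefore its image meets every dense open, in particular the domain of $G$. So both compositions are well defined.

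To conclude, observe that $\CP^N\times\CP^M$ is rational, hence birational to $\CP^{N+M}$; composing $\tilde F$ and $\tilde G$ with such a birational map and its inverse produces rational maps $X\dashrightarrow\CP^{N+M}\dashrightarrow X$ whose composition is $\id_X$, which shows that $X$ is retract rational over $k$. I do not expect any substantial obstacle here: once the generic fibre's retract is spread out to a relative retract over $Y$, everything is a formal manipulation, and the only technical care is the two well-definedness checks above, each following from a dominance statement already built into the hypotheses.
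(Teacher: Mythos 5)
Your proposal is correct and follows essentially the same route as the paper: spread out the generic-fibre retraction to relative maps $F:X\dashrightarrow\CP^N\times Y$ and $G:\CP^N\times Y\dashrightarrow X$ over $Y$, compose with $\id\times\phi$ and $\id\times\psi$ coming from the retract rationality of $Y$, and identify $\CP^N\times\CP^M$ birationally with $\CP^{N+M}$. The well-definedness checks you isolate (that $F$ being over the dominant $h$ lands generically over the domain of $\phi$, and that the whole composite is defined at a point dominating the generic point of $Y$) are exactly the points the paper's proof also records.
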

\begin{proof}
Since the generic fibre $X_\eta$ of $h$ is retract rational, there is a rational map
$$
f:X_\eta\dashrightarrow \CP^r_{k(Y)}
$$
which admits a rational section $g:\CP^r_{k(Y)}\dashrightarrow X_\eta$.
The maps $f$ and $g$ spread out to rational maps of $k$-varieties
$$
F:X\dashrightarrow \CP^r\times Y\ \ \text{and}\ \ G:\CP^r\times Y \dashrightarrow X ,
$$
such that $G\circ F$ is defined and agrees with the identity as a rational map.
Moreover, $F$ and $G$ are rational maps over $Y$, i.e.\ they are compatible with the corresponding maps to $Y$.
Since $Y$ is retract rational as a $k$-variety, there is a rational map $f':Y\dashrightarrow \CP^N$ with a rational section $g':\CP^N\dashrightarrow Y$.
We identify $ \CP^r\times \CP^N$ birationally with $\CP^{r+N}$, by noting that both varieties are compactifications of $\A^{r+N}$.
The compositions
$$
(\id\times f')\circ F:X\dashrightarrow \CP^{r+N}
$$
and
$$
G\circ (\id\times g'):\CP^{r+N}\dashrightarrow X
$$
are then rational maps of $k$-varieties such that the composition
$$
G\circ (\id\times g')\circ (\id\times f')\circ F:X\dashrightarrow X
$$
is defined (this uses that $F$ and $G$ are rational maps over $Y$, hence defined at some point that dominates the generic point of $Y$) and coincides with the identity.
It follows that $X$ is retract rational, as claimed.
\end{proof}

\section{Proof of Theorem \ref{thm:main}}

We may replace $k$ by $\bar k$ and assume that $k$ is algebraically closed.
For $n=2$, $X_{\alpha,\ell}$ is a Severi--Brauer variety and so it is rational over the algebraically closed field $k$.
We may thus assume $n\geq 3$ and argue by induction.
By construction (see e.g.\ \cite[\S 2]{SJ} or \cite[\S 4]{BHS}), there is a birational model $X$ of $X_{\alpha,\ell}$ with  a dominant morphism $f:X\to \Sym^\ell Y$ such that
\begin{itemize}
\item $Y$ is birational to $X_{\alpha',\ell}$ with  $\alpha':=(a_1,\dots ,a_{n-1})$;
\item the generic fibre of $f$ is birational to the norm hypersurface $\{N-a_n=0\} $ over $k(\Sym^\ell Y)$, where 
$$
N:R_{L/K}\mathbb G_{m,L}\longrightarrow \mathbb G_{m, K}
$$ 
is the map of tori associated to the norm map of fields $N:L\to K$, where $L:=k(Y\times \Sym^{\ell-1}Y)$ and $K:=k(\Sym^\ell Y)$, and  $ R_{L/K}\mathbb G_{m,L}$ denotes the Weil restriction of $\mathbb G_{m,L}$ to $K$.
\end{itemize} 
Here the models $X$ and $Y$ are not necessarily proper nor smooth.
(Of course we could always assume  that one of the two conditions is satisfied, but the argument does not need that.)
We could assume that the generic fibre of $f$ is actually isomorphic to the norm hypersurface  $\{N-a_n=0\} $ over $k(\Sym^\ell Y)$, but again this will not play any role in the argument.

Since $k=\bar k$, there is an element $\xi\in k$ with $\xi^\ell=a_n$.
We may regard $\xi$ as an element of $L=k(Y\times \Sym^{\ell-1}Y)$ and we note that $N(\xi)=a_n$.
Multiplication with $\xi$ induces an automorphism of $R_{L/K}\mathbb G_{m,L}$ which identifies the norm 1 torus $\{N-1=0\}$ with   $\{N-a_n=0\}$.
Hence, since $\{N-1=0\}$  is retract rational by \cite[Theorem 4.1]{endo},  $\{N-a_n=0\}$ is retract rational as well.
Theorem \ref{thm:main} therefore follows from Lemmas \ref{lem:1} and \ref{lem:2} above, where we use that $k$ is algebraically closed, hence an infinite field.

\begin{remark}
Stably rational varieties are retract rational and the converse is known to fail  in general over non-closed fields.
However, it is not known whether the two notions coincide over algebraically closed fields, cf.\  \cite{PS}.
While Lemma \ref{lem:2} remains true if we replace retract rationality by stable rationality, this does not seem to be  
clear for 
 Lemma \ref{lem:1}.
More importantly,  the norm 1 tori over $k(\Sym^\ell Y)$ that appear in the above argument are retract rational for all primes,  but they are not stably rational for primes $\ell\geq 5$,  see \cite[Theorem 4.1]{endo}.
In particular,   the norm varieties $X_{\alpha,\ell}$ for $\ell\geq 5$ yield by Theorem \ref{thm:main} natural candidates for varieties over algebraically closed fields that are retract rational but which may potentially be not stably rational in general.
\end{remark}

\section*{Acknowledgements}    
Thanks to Remy van Dobben de Bruyn and the referee for comments, and to Anand Sawant for explaining the results in \cite{BHS} to me. 
This project has received funding from the European Research Council (ERC) under the European Union's Horizon 2020 research and innovation programme under grant agreement No 948066 (ERC-StG RationAlgic).



\begin{thebibliography}{HKLR} 
%
%
%
%
%
%

\bibitem[AM72]{AM}
M.\ Artin and D.\ Mumford, {\em Some elementary examples of unirational varieties which are not rational}, Proc.\ London Math.\ Soc.\ (3) \textbf{25} (1972), 75--95.

\bibitem[AM11]{asok-morel}
A.\ Asok and F.\ Morel, {\em Smooth varieties up to A1-homotopy and algebraic h-cobordisms},  Adv.\  Math.\ \textbf{227} (2011),  1990--2058.

%
\bibitem[Aso13]{asok}
A.\ Asok, {\em Rationality problems and conjectures
of Milnor and Bloch-Kato}, Compos.\ Math.\ \textbf{149} (2013), 1312--1326.
%
%
%
%

\bibitem[BHS21]{BHS}
C.\ Balwe, A.\ Hogadi, and A.\ Sawant, {\em Geometric criteria for $\mathbb A^1$-connectedness and applications to norm varieties}, to appear in Journal of Algebraic Geometry, 2021. 


%
%
%
%
%
%
%
%
%
%
%
%
%
%
%
%
%
%

%


%
%
%
%
%
%
%
%
%
%
%

\bibitem[CTS07]{CTS}
J.-L.\ Colliot-Th\'el\`ene and J.-J.\ Sansuc, {\em The rationality problem for fields of invariants under linear algebraic groups (with special regards to the Brauer group)}, Algebraic groups and homogeneous
spaces, 113--186, Tata Inst.\ Fund.\ Res.\ Stud.\ Math., 19, 2007


%
%
%
%
%
%
%
%





\bibitem[End11]{endo}
S.\ Endo, {\em The rationality problem for norm one tori}, Nagoya Math.\ J.\ \textbf{202} (2011), 83--106.


%
%
%
%
%
%
%
%
%
%
%
%
%
%
%



%
%
%
%
%
%
%
%
%
%
%
%
%
%
%
%
%
%
%
%
%



\bibitem[KM13]{KM}
N.A.\ Karpenko and A.S.\ Merkurjev, {\em On standard norm varieties},  Annales Sc.\ \'Ec.\ Norm.\ Sup.\ \textbf{46} (2013), 177--216.
%
%
%
%
%
%



%
%
%
%
%

\bibitem[Mat68]{mattuck}
A.\ Mattuck, {\em The field of multisymmetric functions}, Proc.\ AMS \textbf{19} (1968), 764--765.


%
%
%
%
%
%
%
%
%
%
%
%
%
%
%

\bibitem[PS21]{PS}
N.\ Pavic and S.\ Schreieder, {\em The diagonal of quartic fivefolds}, Preprint 2021, to appear in Algebraic Geometry.

%
%
%
%
%
%
%

\bibitem[Ros02]{rost02}
M.\ Rost, {\em Norm varieties and algebraic cobordism}, Proceedings of the International Congress of Mathematicians, Vol.\ II (Beijing, 2002), 77--85, Higher Ed.\ Press, Beijing, 2002.


\bibitem[Sal84]{saltman}
D.J.\ Saltman, {\em Retract rational fields and cyclic Galois extensions}, Israel J.\ Math.\ \textbf{47} (1984), 165--215.

%
%
%
%


%

%
%
%
%
%
%
%
%
%

\bibitem[SJ06]{SJ}
A.\ Suslin and S.\ Joukhovitski, {\em Norm varieties}, J.\ Pure Appl.\ Algebra \textbf{206} (2006), 245--276.
%
%
%
%
%
%
%
%
%
%
%
%
%
%
%
%
\bibitem[Voe11]{Voe}
V.\ Voevodsky, {\em On motivic cohomology with $Z/l$-coefficients}, Ann.\ of Math.\ \textbf{174} (2011), 401--438.
%
%
%
%
%
%
%
%
%
%
%
%

%
%
%
%
%

\end{thebibliography}
\end{document}